\theoremstyle{plain}
 \newtheorem{theorem}{Theorem}
 \newtheorem{corollary}{Corollary}
\newtheorem{lemma}{Lemma}
\begin{document}

\title{Somewhat smooth numbers in short intervals}

\begin{abstract}
We use exponent pairs to establish the existence of many $x^a$-smooth numbers in short intervals
$[x-x^b,x]$, when $a>1/2$.
In particular, $b=1-a-a(1-a)^3$ is admissible.
Assuming the exponent-pairs conjecture, one can take $b=(1-a)/2+\epsilon$. 
As an application, we show that $[x-x^{0.4872},x]$ contains many practical numbers when $x$ is large. 
\end{abstract}

\author{Andreas Weingartner}
\address{ 
Department of Mathematics,
351 West University Boulevard,
 Southern Utah University,
Cedar City, Utah 84720, USA}
\email{weingartner@suu.edu}
\date{\today}
\subjclass[2010]{11N25}
\maketitle

\section{Introduction}

We say that a natural number $n$ is $y$-smooth if all of its prime factors are $\le y$. 
Let $\Psi(x,y)$ be the number of such $n\le x$. 
Improving on many earlier efforts by a number of different authors,
Matom\"aki and Radziwi\l\l \cite{MR} established the existence of many $x^\epsilon$-smooth numbers 
in intervals of the form $[x,x+c(\epsilon) \sqrt{x}]$, for every $\epsilon>0$. 
Harman \cite{Har} showed that intervals around $x$ of length $x^{0.45...}$ contain many $x^{0.27...}$-smooth numbers.  

We are interested in the existence of $x^a$-smooth numbers in much shorter intervals, when
$a> 1/2$. More precisely, given $a \in (1/2, 1)$, how small can we take $b$ such that  
\begin{equation*}\label{qeq}
\Psi(x,x^a)-\Psi(x-x^b,x^a) \gg  x^{b-\epsilon}
\end{equation*}
for every $\epsilon>0$?
In that direction, Friedlander and Lagarias \cite{FL} showed that there exists a constant $c>0$ such that 
$b=1-a - c a(1-a)^3$ is admissible, even with $\epsilon=0$, but without providing any numerical estimate for $c$. 
We will use exponent pairs (see \cite{GK}) to find explicit values of $b<1-a$. 
In particular, $b=1-a-a(1-a)^3$ is admissible for every $a \in  (1/2,1)$.

Let $\psi(x)=x-\lfloor x\rfloor -1/2 = \{x\}-1/2$. 
The method used by Friedlander and Lagarias \cite{FL} starts with Chebyshev's identity and 
requires estimates for sums of $\psi(x/p) \log p$, where $p$ runs over primes. 
Our approach involves sums of $\psi(x/n)$ over all integers $n$ from an interval.
We use the estimate
\begin{equation}\label{UB}
\sum_{N\le n \le 2N} \psi(x/n) \ll \min\bigl(x^\theta , x^{k/(k+1)} N^{(l-k)/(k+1)} \bigr)  \qquad (1\le N \le \sqrt{x}),
\end{equation}
where $(k,l)$ is any exponent pair. The two most recent records for $\theta$ are  $\theta=\frac{131}{416}+\epsilon =0.3149...$ by Huxley \cite[Thm. 4]{Hux} and $\theta=\frac{517}{1648}+\epsilon=0.3137...$ by Bourgain and Watt \cite[Eq. (7.4)]{BW}.
For the second estimate in \eqref{UB}, see Graham and Kolesnik \cite[Lemma 4.3]{GK}. 

Let $\nu=2.9882...$ be the minimum value of $(2^u-1)/(u-1)$ for $u>1$. 
\begin{theorem}\label{thm1}
Let $(k,l)$ be an exponent pair and $\theta$ as in \eqref{UB}.
There is a constant $K$ such that 
$$\Psi(x,y)-\Psi(x-z,y) \gg \frac{z}{(\log x)^\nu},$$ 
provided $x \ge y \ge \sqrt{2x}$ and $x\ge z\ge K \min\bigl(x^\theta,x^{l/(k+1)}y^{(k-l)/(k+1)}\bigr)$.
\end{theorem}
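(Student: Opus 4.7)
The plan is complementary counting via the identity
$$\Psi(x,y)=\lfloor x\rfloor-\sum_{y<p\le x}\lfloor x/p\rfloor,$$
which holds because the hypothesis $y\ge\sqrt{2x}$ ensures every $n\le x$ has at most one prime factor exceeding $y$: two such primes would force $n\ge y^2\ge 2x$. Subtracting the analogous identity at $x-z$ and writing $\lfloor t\rfloor=t-\psi(t)-1/2$ gives
$$\Psi(x,y)-\Psi(x-z,y)=z\Bigl(1-\sum_{y<p\le x}\tfrac{1}{p}\Bigr)+E+O(1),$$
where $E:=\sum_{y<p\le x}\bigl(\psi(x/p)-\psi((x-z)/p)\bigr)$. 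Mertens' theorem converts the coefficient of $z$ into $\rho(\log x/\log y)+O(1/\log x)$, with $\rho$ Dickman's function; since $\log x/\log y$ lies in $[1,2]$ throughout the range $y\ge\sqrt{2x}$, this coefficient is bounded below by $1-\log 2>0.3$, so the prospective main term is already $\gg z$, and the theorem reduces to an estimate for $E$.

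Since \eqref{UB} bounds $\psi(x/n)$-sums over integer intervals, I would pass from the prime sum $E$ to integer sums via a Vaughan- or Heath--Brown-type combinatorial identity for the indicator of primes (equivalently, for $\Lambda$). This reorganises $E$ as a bounded number of bilinear pieces
$$\sum_{m\sim M}\sum_{n\sim N}a_m b_n\bigl(\psi(x/(mn))-\psi((x-z)/(mn))\bigr),$$
with $MN\asymp p$ and coefficients $a_m,b_n$ of controlled size. Each piece is treated by dyadic splitting of the outer variable, followed by \eqref{UB} applied to the inner sum; since the primes in $E$ exceed $\sqrt{x}$, one first uses a standard reciprocity to trade $\sum_{n>\sqrt{x}}\psi(x/n)$ for $\sum_{m\le\sqrt{x}}$-type sums before invoking \eqref{UB}. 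A partial summation in the parameter $z$ then converts bounds for $\sum\psi(x/n)$ into the differences $\psi(x/n)-\psi((x-z)/n)$ actually appearing in $E$.

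Summing the dyadic contributions over all pieces of the identity and optimising the depth of that identity against the total number of logarithmic factors accumulated is what should produce the extremal constant $\nu=\min_{u>1}(2^u-1)/(u-1)$, with $2^u-1$ counting the dyadic subdivisions of a depth-$u$ decomposition and $u-1$ the effective saving provided by \eqref{UB}. Provided $z\ge K\min(x^\theta,x^{l/(k+1)}y^{(k-l)/(k+1)})$ with $K$ large, this yields $|E|\le(\text{main term})-cz/(\log x)^\nu$ for a suitable $c>0$, and the theorem follows. The principal obstacle is this final optimisation: obtaining precisely $(\log x)^\nu$ rather than a weaker $(\log x)^C$ requires a careful choice of identity and of the dyadic ranges.
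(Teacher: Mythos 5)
Your reduction is correct as far as it goes, but it goes almost nowhere: since $y\ge\sqrt{2x}$ forces each non-smooth $n\le x$ to have exactly one prime factor $p>y$, your identity is just the tautology $\Psi(x,y)-\Psi(x-z,y)=z+O(1)-\#\{n\in(x-z,x]:P(n)>y\}$, and the bound you need on $E$ is exactly equivalent to showing that the number of non-smooth integers in $(x-z,x]$ is at most $(\log 2+o(1))z$. Writing $n=pm$ with $m=n/p<x/y\le\sqrt{x/2}$, that is the assertion that $\sum_{m<\sqrt{x/2}}\bigl(\pi(x/m)-\pi(\max(y,(x-z)/m))\bigr)$ has the expected size, i.e.\ an average result on primes in the short intervals $((x-z)/m,x/m]$, whose lengths $z/m$ range down to $z/\sqrt{x}<1$ when $z=x^{\theta}$ with $\theta\approx0.314$. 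No combination of Vaughan's identity, divisor-swapping reciprocity and exponent pairs delivers this: after the combinatorial identity the type II pieces are bilinear forms $\sum_m\sum_n a_mb_n\psi(x/(mn))$ with \emph{both} variables near $\sqrt{x}$ and with arbitrary bounded coefficients, so there is no long smooth variable to which \eqref{UB} can be applied, and the hyperbola-type reciprocity you invoke is valid for $\sum_{n\le t}\psi(x/n)$ over consecutive integers, not for prime-supported or arbitrarily weighted sums. This is precisely the Friedlander--Lagarias route, which the introduction explicitly contrasts with the present method, and it is why they only reach interval lengths just below $x^{1-a}$ rather than $x^{\theta}$. Your account of where $\nu=\min_{u>1}(2^u-1)/(u-1)$ should come from (counting dyadic subdivisions of a "depth-$u$" decomposition) has no basis and is a symptom of the missing mechanism.

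The paper's argument avoids primes entirely. It counts pairs $(d,n)$ with $n\in(x-z,x]$ and $d\mid n$, $d\in[x/y,2x/y]$: the lower bound $S\ge z/4$ needs only \eqref{UB} applied to $\sum\psi(x/d)$ over the \emph{consecutive integers} $d$ in the single dyadic block $[x/y,2x/y]\subseteq[1,\sqrt{2x}]$, which is exactly the regime where exponent pairs work; the hypothesis $y\ge\sqrt{2x}$ gives $2x/y\le y$, so any $n$ with a divisor $d$ in that block satisfies $d\le y$ and $n/d\le y$, hence is automatically $y$-smooth. The overcount of each $n$ by at most $\tau(n)$ is then removed by H\"older's inequality together with Shiu's bound $\sum_{x-z\le n\le x}\tau(n)^u\ll z(\log x)^{2^u-1}$, and optimising the H\"older exponent $u$ is what produces $\nu$. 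If you want to salvage your write-up, replace the prime sum by this divisor-counting device; the complementary-counting identity itself cannot be pushed through with the tools you name.
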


Define
\begin{equation}\label{bdef}
b=b(a,k,l) = \frac{l+a(k-l)}{k+1}.
\end{equation}

\begin{corollary}\label{cor1}
Let $(k,l)$ be an exponent pair, $\theta$ as in \eqref{UB} and $1/2< a\le   1$.
There is a constant $K$ such that for $x\ge z\ge K x^{\min(\theta ,b)}$,
$$\Psi(x,x^a)-\Psi(x-z,x^a) \gg \frac{z}{(\log x)^\nu}.$$ 
If $a=1/2$, the conclusion holds if $x^a$ is 
replaced by $\sqrt{2x}$.
\end{corollary}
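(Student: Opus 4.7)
The plan is to read the corollary as a direct specialization of Theorem~\ref{thm1} with the parameter $y$ set to $x^a$. Two things need to be checked: that the hypothesis $y\ge\sqrt{2x}$ of Theorem~\ref{thm1} is compatible with the choice $y=x^a$, and that the quantity $x^{l/(k+1)}y^{(k-l)/(k+1)}$ simplifies to $x^b$ with $b$ given by \eqref{bdef}.

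For the first point, when $a>1/2$ the inequality $x^a\ge\sqrt{2x}$ holds for all $x\ge 2^{1/(2a-1)}$, so it is automatic once $x$ is sufficiently large (in particular once $Kx^{\min(\theta,b)}\le x$, which is implicit in the hypothesis $z\le x$). For the boundary case $a=1/2$, one cannot write $y=x^a$ and satisfy $y\ge\sqrt{2x}$, but taking $y=\sqrt{2x}$ exactly meets the threshold; this is why the corollary states the $a=1/2$ version separately with $x^a$ replaced by $\sqrt{2x}$.

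For the second point, substituting $y=x^a$ gives
\[
x^{l/(k+1)}\, y^{(k-l)/(k+1)} \;=\; x^{l/(k+1)+a(k-l)/(k+1)} \;=\; x^{(l+a(k-l))/(k+1)} \;=\; x^{b},
\]
so the hypothesis $z\ge Kx^{\min(\theta,b)}$ of the corollary is precisely $z\ge K\min\bigl(x^\theta,x^{l/(k+1)}y^{(k-l)/(k+1)}\bigr)$, the hypothesis of Theorem~\ref{thm1}. Invoking Theorem~\ref{thm1} then yields the conclusion $\Psi(x,x^a)-\Psi(x-z,x^a)\gg z/(\log x)^\nu$, and similarly with $x^a$ replaced by $\sqrt{2x}$ when $a=1/2$.

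There is no serious obstacle here: the proof is a one-line algebraic reduction, the only subtlety being the boundary behavior at $a=1/2$, which is handled by the separate clause in the statement. If anything needs to be spelled out, it is that the constant $K$ may need to be enlarged to absorb the range of small $x$ for which $x^a<\sqrt{2x}$, which is harmless since the conclusion becomes vacuous in that range.
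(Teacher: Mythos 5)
Your proposal is correct and matches the paper's intent exactly: the paper gives no separate proof of Corollary \ref{cor1} precisely because it is the direct substitution $y=x^a$ (resp.\ $y=\sqrt{2x}$ when $a=1/2$) into Theorem \ref{thm1}, together with the identity $x^{l/(k+1)}y^{(k-l)/(k+1)}=x^{b}$ from \eqref{bdef}. Your handling of the small-$x$ range by enlarging $K$ (so that the hypothesis $x\ge z\ge Kx^{\min(\theta,b)}$ becomes vacuous there) is the right way to dispose of the only edge case.
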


Starting with the exponent pair $(\kappa,\lambda)=(13/84+\epsilon,55/84+\epsilon)$ 
of Bourgain \cite[Thm. 6]{Bourgain}, and possibly applying van der Corput's processes $A$ or $B$,
we find a sequence of linear functions in $a$, shown in Table \ref{table1}.
When $a$ is close to $1/2$, then $\theta$ is smaller than any $b$ obtained from known exponent pairs. 
When $a$ is close to $1$, we rely on exponent pairs $(k,l)$ with small $k$. 
Heath-Brown \cite[Thm. 2]{HB} found that for integers $m\ge 3$ and every $\epsilon >0$,
\begin{equation}\label{hbeq}
k_m=\frac{2}{(m-1)^2 (m+2)}, \quad l_m= 1-\frac{3m-2}{m(m-1)(m+2)} +\epsilon
\end{equation}
is an exponent pair. This enables us to prove the following result. 

\begin{corollary}\label{cor2}
For each $a \in [1/2,1)$, the conclusion of Corollary \ref{cor1} holds for some $b<1-a - a(1-a)^3  - 4.32 \, a(1-a)^5$.
\end{corollary}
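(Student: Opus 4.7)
The plan is to apply Corollary~\ref{cor1} with the Heath-Brown exponent pair $(k_m, l_m)$ of \eqref{hbeq}, choosing $m \ge 3$ as a function of $u := 1-a$, and to compare the resulting $b(a, k_m, l_m)$ against the target $1 - a - a(1-a)^3 - 4.32 a(1-a)^5$. Substituting \eqref{hbeq} into \eqref{bdef} and simplifying, one derives the closed form
\[
u - b(a, k_m, l_m) = \frac{3um^2 - (u+2)m + 2u}{m^4 - 3m^2 + 4m}.
\]
The task reduces to exhibiting an admissible integer $m$ for which this exceeds $(1-u)(u^3 + 4.32 u^5)$.

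For $a$ close to $1$, I would take $m = \lceil 1/u \rceil$ and set $\delta := m - 1/u \in [0,1)$. Clearing denominators produces a polynomial inequality in $m$ whose two highest coefficients cancel identically. The next coefficient (that of $m^5$) computes to $-3\delta^2 + 3\delta + 0.68$, which is uniformly positive on $[0,1]$ (ranging from $0.68$ at the endpoints to $1.43$ at $\delta = 1/2$), so the inequality holds for all sufficiently small $u$. For $a$ near $1/2$, where $\lceil 1/u \rceil \le 2$ is inadmissible in \eqref{hbeq}, I would instead invoke the $\theta$-bound in \eqref{UB}: at $a = 1/2$ the target is $0.37$, which comfortably exceeds $\theta = 517/1648 + \epsilon < 0.314$. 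The intermediate range is bridged by a direct verification of the closed form with $m \in \{3, 4, 5\}$.

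The main obstacle is making the transition between these regimes rigorous. The $m^4$ coefficient of the polynomial difference, $11\delta^3 - 9\delta^2 - 14.68\delta - 2.68$, is negative on $[0,1]$ and reaches about $-15.36$ near $\delta = 1$; so the $0.68\,u^5$-slack is eventually overtaken by the $u^6$-correction for moderately sized $u$. One therefore either produces explicit remainder bounds yielding a threshold $u_0$ below which the asymptotic dominates, or covers the middle range of $u$ by a finite computation at small integer values of $m$. The $0.68$ margin in the leading coefficient and the substantial gap between $\theta$ and the target at $a = 1/2$ provide adequate room for the argument to close.
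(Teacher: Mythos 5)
Your closed form for $u-b(a,k_m,l_m)$ and your leading-coefficient computations are correct (I verified that the $m^7$ and $m^6$ terms cancel and that the $m^5$ and $m^4$ coefficients are $-3\delta^2+3\delta+0.68$ and $11\delta^3-9\delta^2-14.68\delta-2.68$), and your treatment of $a$ near $1/2$ via the $\theta$-bound matches the paper. But there are two genuine problems in the middle range, which is exactly where the inequality is tightest.

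First, the recipe $m=\lceil 1/u\rceil$ assigns the \emph{wrong} Heath-Brown pair on the slivers $a\in(1-1/m,\,a_m)$, and there the inequality actually fails. Concretely, at $a=0.67$ your rule gives $m=4$, hence $b=(93-89a)/112\approx 0.2979$, while the target is $f(0.67)\approx 0.2946$; the pair $m=3$, which gives $b=(23-20a)/33\approx 0.2909$, is what works. So ``direct verification of the closed form with $m\in\{3,4,5\}$'' under your parametrization would refute, not confirm, the claim. The correct partition is the one the paper uses: assign $m$ to $a\in[a_{m-1},a_m]$, where $a_m$ is the crossover point $b(a,k_m,l_m)=b(a,k_{m+1},l_{m+1})$.

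Second, even after fixing the partition, your asymptotic regime does not start until $m$ is quite large: with the $m^5$ coefficient bounded below by $0.68$ and the $m^4$ coefficient as negative as $-15.36$, the $m^5$ term dominates only for $m\gtrsim 23$ (worse once lower-order terms are included), so the ``intermediate range'' is roughly $3\le m\le 30$, not $m\in\{3,4,5\}$; moreover for each such $m$ you must verify the inequality over a whole interval of $a$ (a linear function versus a quintic), and the margins are minuscule ($f(a_3)-b(a_3,k_3,l_3)\approx 10^{-4}$ — the constant $4.32$ is essentially sharp at these points). The paper closes both issues with one observation you are missing: $f''(a)<0$ on $(1/2,1)$ while each $b(\cdot,k_m,l_m)$ is linear, so $f-b$ is concave and it suffices to check positivity at the two endpoints $a_{m-1},a_m$ of each interval; the endpoint quantity $f(a_m)-b(a_m,k_m,l_m)$ is a single explicit rational function of $m$ whose positivity for all integers $m\ge 1$ is checked once, uniformly in $m$, with no large/small case split. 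I recommend replacing your $\lceil 1/u\rceil$ parametrization and asymptotic expansion by this concavity-plus-endpoint argument.
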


The value of $a$, for which $b(a,k_m,l_m)=b(a,k_{m+1},l_{m+1})$,  is given by
$$
a_m := 1 -\frac{1}{m} + \frac{2-m^{-1}}{m^3+m^2+2m-1}
$$
If $a>0.796...$ and $a\in [a_{m-1},a_m]$, then $b$ is minimized by $b(a,k_m,l_m)$. 
This yields slightly smaller values of $b$ than Corollary \ref{cor2}.

\smallskip
\begin{table}[h]
     \begin{tabular}{ | c |  c |  c |  c | c | }
    \hline
     $b$   & Interval for $a$ &  Exponent Pair \\ \hline   
     $517/1648 +\epsilon$ & $[0.500..., 0.579...]$ &   \\ \hline
    $ (110-55a)/249+\epsilon$ & $[0.579..., 0.590...]$ &  $BA(\kappa,\lambda)$ \\ \hline
     $ (55-42a)/97+\epsilon$ & $[0.590..., 0.701...]$ &  $(\kappa,\lambda)$ \\ \hline
        $ (152-139a)/207+\epsilon$ & $[0.701..., 0.766...]$ &  $A(\kappa,\lambda)$ \\ \hline
          $ (359-346a)/427+\epsilon$ & $[0.766..., 0.796...]$ &  $AA(\kappa,\lambda)$ \\ \hline
          $  b(a,k_m,l_m)$ & $  [a_{m-1},a_m], \ m\ge 5$ &  $(k_m,l_m)$ \\ \hline
     \end{tabular}        
   \medskip
     \caption{Admissible values of $b$, depending on $a$.}\label{table1}
\end{table}

The values $a=1-1/m$, where $m\ge 2$  is an integer, may be of particular interest.
Here we have $a_{m-1}< a=1-1/m < a_m$ and
$$ b = b(1-1/m,k_m,l_m) = \frac{(m-1) \left(m^3+m^2-3 m+2\right)}{m^2 \left(m^3-3 m+4\right)} +\epsilon.
$$ 

The exponent-pairs conjecture states that $(k,l)=(\epsilon, 1/2+\epsilon)$ is an exponent pair for every $\epsilon>0$. 
\begin{corollary}\label{cor3}
 If $(\epsilon, 1/2+\epsilon)$ is an exponent pair, then the conclusion of Corollary \ref{cor1} holds with $b=(1-a)/2+\epsilon$
 for each $a \in [1/2,1]$.
\end{corollary}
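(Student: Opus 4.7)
The plan is to apply Corollary \ref{cor1} directly with the conjectural exponent pair $(k,l) = (\epsilon, 1/2 + \epsilon)$ and simply read off the claimed value of $b$. The first step is to substitute into the definition \eqref{bdef}:
$$
b(a, \epsilon, 1/2+\epsilon) = \frac{(1/2+\epsilon) + a\bigl(\epsilon - (1/2+\epsilon)\bigr)}{\epsilon + 1} = \frac{(1-a)/2 + \epsilon}{1+\epsilon}.
$$
For any fixed $a \in [1/2, 1]$, this expression is at most $(1-a)/2 + \epsilon'$ once $\epsilon$ is chosen small enough, so after relabelling $\epsilon' \mapsto \epsilon$ I recover the target value of $b$ appearing in the statement.

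Next I invoke Corollary \ref{cor1} with this exponent pair. For $a \in (1/2, 1]$, the hypothesis $z \ge K x^{\min(\theta, b)}$ is certainly met whenever $z \ge K x^b$, since $\min(\theta, b) \le b$; this yields the asserted lower bound for $\Psi(x,x^a) - \Psi(x-z,x^a)$. The endpoint $a = 1/2$ is covered by the second clause of Corollary \ref{cor1}, in which $y = \sqrt{2x}$ replaces $x^a$; noting that $b(1/2, \epsilon, 1/2+\epsilon) = 1/4 + O(\epsilon)$ matches $(1-a)/2 + \epsilon$ at $a = 1/2$, the same argument applies.

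I do not anticipate any genuine obstacle: the statement is essentially a direct specialization of Corollary \ref{cor1} to the conjectural exponent pair, and the only point requiring attention is that the $\epsilon$ quantifying the exponent pair can be absorbed into a single $\epsilon$ in the conclusion. This is precisely why the exponent-pairs conjecture appears in the hypothesis rather than any stronger analytic input being needed.
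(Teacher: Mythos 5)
Your proposal is correct and matches the paper's (implicit) argument: Corollary \ref{cor3} is indeed just Corollary \ref{cor1} specialized to the conjectural pair $(k,l)=(\epsilon,1/2+\epsilon)$, for which \eqref{bdef} gives $b=\bigl((1-a)/2+\epsilon\bigr)/(1+\epsilon)\le (1-a)/2+\epsilon$, and since this is below $\theta$ the minimum in the hypothesis is harmless. Your handling of the $a=1/2$ endpoint via the second clause of Corollary \ref{cor1} is also the intended reading.
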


 \begin{figure}[h]\label{fig1}
\begin{center}
\includegraphics[height=70mm,width=112mm]{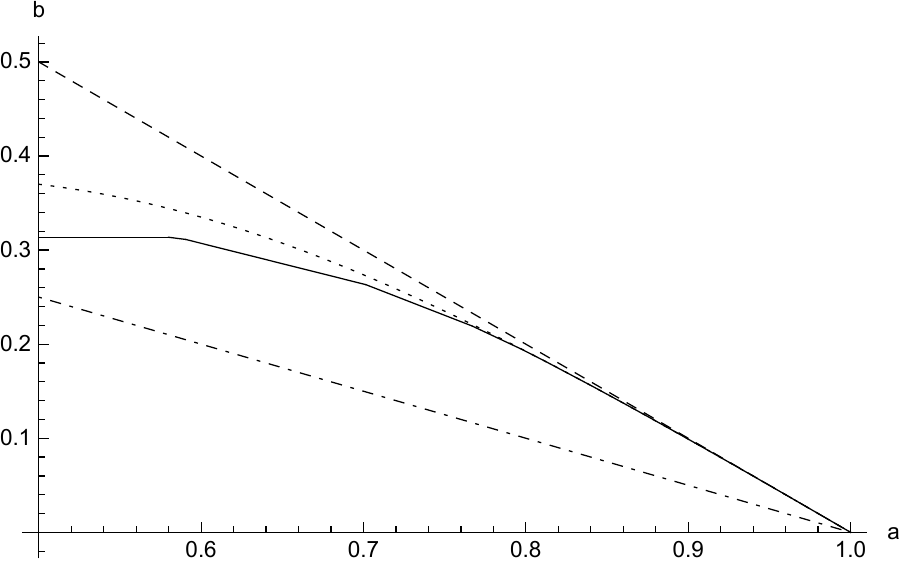}
\caption{Admissible values of $b$ based on Table \ref{table1} (solid);
$b=1-a-a(1-a)^3  - 4.32 \, a(1-a)^5$ (dotted) from Cor.\ \ref{cor2};  
$b=1-a$ (dashed) from the exponent pair $(k,l)=(0,1)$; 
 $b=\frac{1}{2}(1-a)$ (dot-dashed) from the exponent-pairs conjecture.}
\label{figure2}
\end{center}
\end{figure}

If one is only concerned with the existence of a single $y$-smooth number 
in short intervals, then a construction due to Friedlander and Lagarias \cite{FL} 
(consider integers of the form $m^2-h^2=(m-h)(m+h)$, where $m=\lceil \sqrt{x}\rceil$ 
and $h=0,1,2,\ldots$) and an easy exercise (aided by a computer to deal with small values of $x$) lead to
the explicit estimate
$$
\Psi\bigl(x,\sqrt{2x}\bigr)  - \Psi\bigl(x-3 x^{1/4},\sqrt{2x}\bigr)  \ge 1 \qquad (x\ge 1).
$$
From Table \ref{table1}, we find that our intervals are wider than $3x^{1/4}$ 
when $a< 401/556=0.721...$, but are shorter when $a>401/556$. 

\smallskip

\section{Proofs}

Let $\tau(n)$ be the number of positive divisors of $n$.
The following estimate is a special case of Theorem 2 of Shiu \cite{Shiu}.
\begin{lemma}\label{ShiuLem}
Let $\epsilon>0$ and $u \in \mathbb{R}$ be fixed. For $x \ge 2$ and $x^\epsilon \le z \le x$, we have
$$
\sum_{x-z\le n \le x} (\tau (n))^u \ll z (\log x)^{2^u -1}.
$$
\end{lemma}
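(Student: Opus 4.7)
The plan is to deduce this estimate directly from Shiu's theorem applied to the multiplicative function $f(n) = \tau(n)^u$. Shiu's result gives, for any non-negative multiplicative $f$ satisfying the growth conditions $f(p^k) \le A_1^k$ for every prime power and $f(n) \ll_\delta n^\delta$ uniformly, a short-interval bound of the shape
$$\sum_{x-z \le n \le x} f(n) \ll \frac{z}{\log x}\exp\Bigl(\sum_{p \le x}\frac{f(p)}{p}\Bigr),$$
valid whenever $x^\epsilon \le z \le x$. So the proof reduces to two routine verifications: that $f = \tau^u$ satisfies the hypotheses, and that the sum over primes exponentiates to $(\log x)^{2^u}$.

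First I would check the structural hypotheses. For $u \ge 0$ one has $f(p^k) = (k+1)^u$, and choosing any $A_1 > 2^u$ yields $(k+1)^u \le A_1^k$ for all $k \ge 1$; the standard bound $\tau(n) \ll_\delta n^\delta$ then gives $f(n) \ll_\delta n^\delta$ after rescaling $\delta$. For $u < 0$ the function is bounded above by $1$, so both conditions hold trivially. Either way, $\tau^u$ lies in the admissible class.

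Next I would evaluate the exponential factor. Since $\tau(p) = 2$ and $f(p) = 2^u$, Mertens's theorem yields
$$\sum_{p \le x} \frac{f(p)}{p} = 2^u \sum_{p \le x} \frac{1}{p} = 2^u \log\log x + O(1),$$
so the exponential factor is $\asymp (\log x)^{2^u}$. Substituting into Shiu's bound produces $\ll z(\log x)^{2^u - 1}$, exactly as claimed.

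There is no substantive obstacle; this is a pure verification that Shiu's hypotheses apply. The only mild subtlety is the case $u < 0$, where the growth conditions must be verified via the trivial pointwise bound $\tau(n)^u \le 1$ rather than through the usual divisor estimate; once that is noted, the argument is uniform in $u$.
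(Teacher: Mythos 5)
Your proposal is correct and is exactly the route the paper takes: the lemma is stated there as a direct special case of Shiu's Theorem 2, with the hypotheses for $f=\tau^u$ and the Mertens computation $\sum_{p\le x}2^u/p = 2^u\log\log x + O(1)$ left implicit. Your explicit verification of the growth conditions (including the trivial $u<0$ case) fills in precisely what the paper omits.
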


\begin{proof}[Proof of Theorem \ref{thm1}]
Let $P(n)$ denote the largest prime factor of $n$.
Note that the result holds if $z>x/2$, so we may assume $z\le x/2$. 
Define
\begin{equation}\label{Sdef}
S := \sum_{x/y\le d \le 2x/y} \sum_{x-z<n\le x \atop n \equiv 0 \bmod d} 1.
\end{equation}
We have
\begin{equation*}
\begin{split}
S&  =\sum_{x/y\le d \le 2 x/y} \left( \lfloor x/d \rfloor -  \lfloor (x-z)/d \rfloor \right) \\
& =\sum_{x/y\le d \le 2 x/y} z/d - \sum_{x/y\le d \le 2x/y}  \psi(x/d) + \sum_{x/y\le d \le 2x/y}  \psi((x-z)/d) \\
&\ge z/3+O\bigl(\min\bigl(x^\theta,x^{l/(k+1)}y^{(k-l)/(k+1)}\bigr)\bigr)  \\
& \ge z/4,
\end{split}
\end{equation*}
by \eqref{UB} and the assumptions of Theorem \ref{thm1}.
 
Note that $y\ge \sqrt{2x}$ implies $2x/y \le y$. 
Every $n$ counted in the inner sum of \eqref{Sdef} has a divisor $d \in [x/y,2x/y]\subseteq [x/y,y]$. 
Since $d\le y$ and $n/d\le x/(x/y)=y$, we have $P(n)\le y$, i.e. $n$ is $y$-smooth.  
Moreover, each $n$ is counted at most $\tau(n)$ times, once for each divisor $d$ of $n$ with $d\in [x/y,2x/y]$. 
Thus,
$$
S \le \sum_{x-z<n\le x \atop P(n)\le y} \tau(n).
$$
For real numbers $t , u>1$ with $1/t +1/u=1$, H\"older's inequality yields
\begin{equation*}
\begin{split}
S & \le \Biggl(\sum_{x-z<n\le x \atop P(n)\le y} 1\Biggr)^{1/t}  \Biggl(\sum_{x-z<n\le x }\tau(n)^u \Biggr)^{1/u} \\
& \ll \left(\Psi(x,y)-\Psi(x-z,y)\right)^{1/t}  z^{1/u} (\log x)^{(2^u-1)/u},
\end{split}
\end{equation*}
by Lemma \ref{ShiuLem}.
Since $S\ge z/4$, we get
$$
\Psi(x,y)-\Psi(x-z,y) \gg \frac{z}{(\log x)^{(2^u-1)/(u-1)}}.
$$
The last exponent has a minimum value of $\nu=2.9882...$ at $u=2.1080...$
\end{proof}

\smallskip

\begin{proof}[Proof of Corollary \ref{cor2}]
For $m\ge 3$ and $a\in [a_{m-1},a_m]$, we want to show that $b(a,k_m,l_m) < f(a)$, where $f(a)=1-a-a(1-a)^3-4.32a(1-a)^5$. 
Since $f''(a)<0$ for $1/2<a<1$ and $b(a,k_m,l_m)$ is a linear function in $a$ for each $m$,
it suffices to verify the inequality at the endpoints $a=a_m$. That is, we need to show that $b(a_m,k_m,l_m) < f(a_m)$
for $m\ge 2$. We find that $f(a_m)-b(a_m,k_m,l_m) $ is a rational function in $m$ that is positive for every $m\ge 1$. 
This proves the claim for $a\ge a_2 = 3/5$. 
If $1/2 \le a < 3/5$, the result follows from Table \ref{table1}. 
\end{proof}

\section{Application to practical numbers}

Let $\mathcal{A}$ be the set of positive integers containing $n=1$ and all those $n \ge 2$ with prime factorization $n=p_1^{\alpha_1} \cdots p_k^{\alpha_k}$, $p_1<p_2<\ldots < p_k$, which satisfy $p_1=2$ and 
\begin{equation*}
p_{i} \le  p_1^{\alpha_1} \cdots p_{i-1}^{\alpha_{i-1}} \qquad (2\le i \le k).
\end{equation*}

The significance of the set $\mathcal{A}$ is that it is a subset of several notable integer sequences,
including the practical numbers (i.e. integers $n$ such that every natural number $m\le n$ can be expressed as a sum of distinct positive divisors of $n$ \cite{PW, Saias,Ten86,PDD}), the $t$-dense numbers, for every $t\ge 2$, (i.e. the ratios 
of consecutive divisors of $n$ are at most $t$, \cite{PW, Saias,Ten86, PDD}), and the $\varphi$-practical numbers (i.e. $x^n-1$ has a divisor in $\mathbb{Z}[x]$ of every degree up to $n$, \cite{PTW}).

Let $\nu=2.9882...$ be as in Theorem \ref{thm1}, $C=(1-e^{-\gamma})^{-1}=2.280...$, where $\gamma=0.5772...$ is
Euler's constant, and 
$$\mu_0 := 2\nu+2+C\log 2 = 9.5569...$$ 
\begin{theorem}\label{thm2}
 Let $(k,l)$ be an exponent pair, $\displaystyle \beta =\frac{5k+l+2}{6(k+1)}$ and $\mu >\mu_0$.
There exists a constant $K$ such that for $x\ge z \ge K x^\beta$, the interval $[x-z,x]$ contains 
$\gg z (\log x)^{-\mu}$  members of $\mathcal{A}$. 
\end{theorem}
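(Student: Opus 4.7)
The strategy is to adapt Theorem~\ref{thm1} to count $\mathcal{A}$-members in short intervals, exploiting the multiplicative decomposition forced by the chain condition. Each $n\in\mathcal{A}$ with $n\ge 2$ factors uniquely as $n=dp^{\alpha}$ with $p=P(n)$, $p^{\alpha}\|n$, $d\in\mathcal{A}$, $P(d)<p$, and $p\le d$. The contribution of $\alpha\ge 2$ over $(x-z,x]$ is bounded by $\sum_{p\le\sqrt{x}}(z/p^2+1)=O(\sqrt{x}+z\cdot\text{const})$ with an arbitrarily small constant, so it is negligible and I only count $\alpha=1$.

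Fix $y=x^{1/6}$ and $M=x/y=x^{5/6}$, and consider the lower bound
$$|\mathcal{A}\cap(x-z,x]|\;\ge\;S\;-\;O(\sqrt{x}),\qquad S:=\sum_{\substack{d\in\mathcal{A}\cap(M/2,M]\\ P(d)\le y/2}}\bigl(\pi(x/d)-\pi((x-z)/d)\bigr),$$
where the smoothness restriction $P(d)\le y/2$ makes the representation $n=dp$ unique (with $p=P(n)$ the sole prime factor of $n$ exceeding $y$) and makes the conditions $P(d)<p\le d$ automatic for the relevant primes $p\in(y/2,y]$. To estimate $S$ I would swap the order of summation, $S=\sum_{p}N(p)$ with $N(p)$ a short-interval count of $y$-smooth $\mathcal{A}$-members in $((x-z)/p,x/p]\cap(M/2,M]$ of length $\asymp z/y$, and then replicate the Theorem~\ref{thm1} argument at this inner scale: form an integer divisor sum, apply \eqref{UB} to the resulting $\psi$-sum, and close with Lemma~\ref{ShiuLem} together with H\"older's inequality. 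Mertens' theorem $\sum_{y/2<p\le y}1/p\sim(\log 2)/\log y$ handles the outer sum, and the asymptotic $A(x)\sim Cx/\log x$ with $C=(1-e^{-\gamma})^{-1}$ supplies the density of eligible $d$. Together these produce the $C\log 2$ term in $\mu_0=2\nu+2+C\log 2$, the two nested H\"older/Shiu applications (one inner, one outer) account for the $2\nu$, and the residual $+2$ absorbs secondary log losses from the density estimate for $y$-smooth $\mathcal{A}$-members and the handling of boundary cases.

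The exponent $\beta=(5k+l+2)/(6(k+1))$ decomposes as $\beta=b(5/6,k,l)+\tfrac{1}{3(k+1)}$: the first summand is the Corollary~\ref{cor1} threshold at $a=5/6$ produced by \eqref{UB} with $N=x^{1/6}$, and the extra $\tfrac{1}{3(k+1)}$ is the additional width required so that the inner short-interval main term dominates its own exponent-pair error after summing over the dyadic range of $p$. The main obstacle will be precisely this inner estimate, since the target window has length $z/p\ll 1$ and no prime-counting or smooth-number count applies directly; the inner count must therefore be executed at the level of the integer divisor sum, with careful bookkeeping of the $\tau(n)$ multiplicities at both scales so that the nested Shiu/H\"older step loses only the claimed polylogarithmic factor.
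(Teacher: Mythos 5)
Your decomposition runs in the wrong direction, and this creates a circularity that the proposal does not escape. You write the target $\mathcal{A}$-member as $dp$ with the \emph{large} factor $d\asymp x^{5/6}$ required to lie in $\mathcal{A}$, and after swapping the order of summation you must estimate $N(p)$, the number of $y$-smooth members of $\mathcal{A}$ in an interval of length $\asymp z/p\asymp x^{\beta-1/6}\approx x^{0.32}$ around $x^{5/6}$ (incidentally, $z/p\gg1$ here, not $\ll1$ as you wrote; it is the un-swapped interval of length $z/d$ that falls below $1$). Relative to the scale $x^{5/6}$ this is an interval of exponent about $0.385$, so the inner count is a \emph{stronger} short-interval statement about $\mathcal{A}$ than the theorem you are proving ($\beta\approx0.487$). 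Your plan to "replicate the Theorem~\ref{thm1} argument at this inner scale" cannot close this gap: the divisor-sum device in Theorem~\ref{thm1} works because a single divisor $d\in[x/y,y]$ of $n$ \emph{certifies} that $n$ is $y$-smooth, whereas membership in $\mathcal{A}$ depends on the entire chain of prime powers and admits no such local certificate. The attribution of the $C\log2$ in $\mu_0$ to Mertens' theorem plus the density of $\mathcal{A}$ is also not right; Mertens contributes a factor $\log 2/\log y$, which is a power-of-$x$ issue, not a power of $\log x$.

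The paper's proof inverts the roles of the two factors. It writes the target as $mn$ where $n\in\mathcal{A}$ is \emph{small}, $n\in(2x^{1/3},3x^{1/3}]$, counted on a full dyadic interval by the known asymptotic $\sim cx^{1/3}/\log x$ (no short-interval difficulty at all), and $m$ is a $(x/n)^{3/4}$-smooth integer in the genuinely short interval $[x/n-z/n,x/n]$ around $x/n\asymp x^{2/3}$, supplied by Corollary~\ref{cor1} with $a=3/4$; this is where $\beta=\tfrac13+\tfrac23\,b(3/4,k,l)=(5k+l+2)/(6(k+1))$ comes from. Lemma~\ref{ML} then certifies $mn\in\mathcal{A}$ directly when $P(m)\le x^{1/3}$, and via the factorization $m=pr$ with $p<rn$ when $x^{1/3}<P(m)\le(x/n)^{3/4}$. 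The exponent $\mu_0=2\nu+2+C\log2$ is assembled from: $\nu+1$ for the count of pairs $(m,n)$, and $\nu+1+C\log2$ for the multiplicity $\tau(mn)\le\tau(m)\tau(n)$, where $\tau(n)\le2^{\Omega(n)}\le(\log x)^{C\log2+\epsilon}$ uses the normal order $\Omega(n)\le(C+\epsilon)\log\log n$ for $n\in\mathcal{A}$ and $\tau(m)\ll(\log x)^{\nu+1}$ holds for a positive proportion of the smooth $m$ by Lemma~\ref{ShiuLem}. If you want to salvage your outline, the essential fix is to place the $\mathcal{A}$-structure on the short factor and the short-interval analytic input on the smooth factor, exactly as Lemma~\ref{ML} permits.
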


The exponent pair $(k,l)=(13/194+\epsilon,76/97+\epsilon)=A(\kappa,\lambda)$ yields:
\begin{corollary}\label{cor4}
For every $\beta >605/1242 = 0.4871...$  and $\mu >\mu_0$, the conclusion of Theorem \ref{thm2} holds. 
Assuming the exponent-pairs conjecture,  it holds for every $\beta > 5/12=0.4166...$. 
\end{corollary}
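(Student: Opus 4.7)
The plan is to apply Theorem \ref{thm2} directly, with two specific exponent pairs. For the unconditional bound I will use the pair named in the statement just above the corollary, namely $(k,l) = A(\kappa,\lambda) = (13/194+\epsilon, 76/97+\epsilon)$, obtained by applying van der Corput's $A$-process $(k,l)\mapsto(k/(2k+2),(k+l+1)/(2k+2))$ to Bourgain's pair $(\kappa,\lambda)=(13/84+\epsilon,55/84+\epsilon)$ from \cite{Bourgain}. For the conditional bound I will use the conjectural pair $(\epsilon,1/2+\epsilon)$.

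First I will substitute $k=13/194$ and $l=76/97=152/194$ into the formula $\beta = (5k+l+2)/(6(k+1))$ from Theorem \ref{thm2}:
$$
\beta \;=\; \frac{65/194+152/194+388/194}{6\cdot 207/194} \;=\; \frac{605/194}{1242/194} \;=\; \frac{605}{1242}+\epsilon,
$$
which, combined with Theorem \ref{thm2}, yields the first assertion of the corollary. Then substituting $(k,l)=(\epsilon,1/2+\epsilon)$ gives $\beta=(1/2+2)/6+O(\epsilon)=5/12+O(\epsilon)$, which gives the conditional assertion.

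Beyond this arithmetic there is no substantive obstacle. The one subsidiary point worth confirming, though not strictly required by the statement, is that $A(\kappa,\lambda)$ is actually the most favorable exponent pair currently on record for this particular formula. Rewriting $\beta = \tfrac{5}{6}+\tfrac{l-3}{6(k+1)}$ shows that one wants both $k$ and $l$ small, and a brief check against the natural competitors — $(\kappa,\lambda)$ itself, $AA(\kappa,\lambda)$, $BA(\kappa,\lambda)$, and the Heath-Brown family $(k_m,l_m)$ from \eqref{hbeq} for small $m$ — confirms that each of these yields a value of $\beta$ strictly larger than $605/1242$. So $A(\kappa,\lambda)$ really does give the record stated in the corollary.
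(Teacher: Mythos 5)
Your proposal is correct and matches the paper exactly: the paper derives Corollary \ref{cor4} by the same direct substitution of $(k,l)=A(\kappa,\lambda)=(13/194+\epsilon,76/97+\epsilon)$ and of the conjectural pair $(\epsilon,1/2+\epsilon)$ into $\beta=(5k+l+2)/(6(k+1))$, and your arithmetic checks out. The extra verification that $A(\kappa,\lambda)$ is optimal among the natural candidates is a nice sanity check but not required.
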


\begin{corollary}\label{cor5}
The interval $[x-x^{0.4872},x]$ contains at least $x^{0.4872}(\log x)^{-9.557}$ members of 
$\mathcal{A}$, for all sufficiently large $x$.
\end{corollary}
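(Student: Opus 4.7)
The plan is to deduce Corollary \ref{cor5} from Corollary \ref{cor4} by choosing parameters that leave a small slack in both the exponent of $x$ and the exponent of $\log x$, and then using that slack to absorb the implicit $\gg$-constant into the logarithmic factor.

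First I would observe that the stated numerical constants $0.4872$ and $9.557$ are slightly larger than the thresholds $605/1242 = 0.4871\ldots$ and $\mu_0 = 9.5569\ldots$ appearing in Corollary \ref{cor4}. So I can fix auxiliary numbers $\beta$ and $\mu$ with $605/1242 < \beta < 0.4872$ and $\mu_0 < \mu < 9.557$. Applying Corollary \ref{cor4} to this $\beta$ and $\mu$ yields a constant $K>0$ and a positive implicit constant $c>0$ such that, whenever $x\ge z\ge K x^\beta$, the interval $[x-z,x]$ contains at least $c\, z(\log x)^{-\mu}$ members of $\mathcal{A}$.

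Next I would specialize to $z = x^{0.4872}$. The hypothesis $z\ge K x^\beta$ then reads $x^{0.4872-\beta} \ge K$, which holds for all sufficiently large $x$ because $0.4872 - \beta > 0$. Corollary \ref{cor4} therefore gives at least $c\, x^{0.4872}(\log x)^{-\mu}$ members of $\mathcal{A}$ in $[x-x^{0.4872},x]$. Comparing this with the claimed lower bound, the ratio
$$
\frac{c\, x^{0.4872}(\log x)^{-\mu}}{x^{0.4872}(\log x)^{-9.557}} = c\,(\log x)^{9.557-\mu}
$$
tends to infinity with $x$, since $9.557 - \mu > 0$. Hence for all sufficiently large $x$ this ratio exceeds $1$, and the claim follows.

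There is no real obstacle here beyond a numerical check: the proof is essentially just verifying that the two margins $0.4872 - 605/1242$ and $9.557 - \mu_0$ are both strictly positive, the first of which gives the room to apply Corollary \ref{cor4} and the second of which swallows the unspecified constant $c$ into the $(\log x)$-factor.
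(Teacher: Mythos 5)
Your proposal is correct and is exactly the intended deduction: the paper gives no separate proof of Corollary \ref{cor5}, which is meant to follow immediately from Corollary \ref{cor4} by noting that $0.4872 > 605/1242 = 0.48711\ldots$ and $9.557 > \mu_0 = 9.5569\ldots$, so that the surplus power of $\log x$ absorbs the implied constant. Your numerical checks and the absorption argument are all in order.
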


A quick search on a computer suggests that Corollary \ref{cor5} probably holds for all $x\ge 504$. 

It is clear that Theorem \ref{thm2} and its corollaries  remain valid if $\mathcal{A}$ is replaced by any superset of $\mathcal{A}$.
In the case of practical numbers, Corollary \ref{cor5} improves on two earlier results: 
Hausman and Shapiro \cite{HS} found that the interval $[x^2, (x+1)^2]$ contains a practical  number for every $x\ge 1$,
in analogy with Legendre's conjecture for primes. 
 Melfi \cite[Thm. 9]{Mel95} sharpened this by showing that the 
 interval $[x,x+K\sqrt{x/\log\log x}]$ contains a practical number for all large $x$ and some constant $K$.
  
 Granville \cite[Conj. 4.4.2]{Gran} states the conjecture that for every fixed $\epsilon >0$,
the interval $[x-x^\epsilon,x]$ contains a $x^\epsilon$-smooth number for all $x\ge x_0(\epsilon)$. 
Pomerance \cite{PTalk} points out that this would imply the existence of a practical number 
(or member of $\mathcal{A}$) in every interval $[x-x^\epsilon,x]$ for large $x$. 

The following observation follows at once from the definition of the set $\mathcal{A}$.

\begin{lemma}\label{ML}
If $n \in \mathcal{A}$ and $P(m)\le n$, then $mn \in \mathcal{A}$. 
\end{lemma}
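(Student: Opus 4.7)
The plan is to verify the defining inequalities for membership in $\mathcal{A}$ directly on the prime factorization of $mn$, using only the hypothesis $n \in \mathcal{A}$ and $P(m) \le n$. I first dispose of the degenerate case $n = 1$: then $P(m) \le 1$ forces $m = 1$, so $mn = 1 \in \mathcal{A}$ trivially. From now on I may assume $n \ge 2$, in which case $n \in \mathcal{A}$ forces $2 \mid n$, hence $2 \mid mn$. Writing the factorization of $mn$ as $q_1 = 2 < q_2 < \cdots < q_r$ with exponents $\beta_j$, the task reduces to verifying $q_i \le q_1^{\beta_1} \cdots q_{i-1}^{\beta_{i-1}}$ for each $i$ with $2 \le i \le r$.

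Fix such an $i$ and set $q := q_i$. I split into two cases according to whether $q$ divides $n$. The easy case is $q \mid n$: here $q$ appears as some $p_s$ with $s \ge 2$ in the factorization of $n$, and the defining condition for $n \in \mathcal{A}$ bounds $q$ by a product of prime powers of $n$ that all involve primes strictly less than $q$. Each such prime power divides the corresponding prime power in $mn$, so the product on the right divides $q_1^{\beta_1} \cdots q_{i-1}^{\beta_{i-1}}$, and the inequality transfers.

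The slightly more delicate case is $q \nmid n$: then $q \mid m$, so $q \le P(m) \le n$. I locate the smallest prime factor $p_s$ of $n$ with $p_s > q$ (if any), noting $s \ge 2$ since $p_1 = 2 < q$. The primes $p_1, \ldots, p_{s-1}$ are then all strictly below $q$ (they are $\le q$ by minimality of $s$, and none equals $q$ because $q \nmid n$). Applying the defining condition on $n$ to $p_s$ gives $p_s \le p_1^{\alpha_1} \cdots p_{s-1}^{\alpha_{s-1}}$, which divides $q_1^{\beta_1} \cdots q_{i-1}^{\beta_{i-1}}$, whence $q < p_s \le q_1^{\beta_1} \cdots q_{i-1}^{\beta_{i-1}}$. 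If no such $p_s$ exists, every prime of $n$ lies below $q$, so $n$ itself divides $q_1^{\beta_1} \cdots q_{i-1}^{\beta_{i-1}}$ and $q \le n$ closes the argument.

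I do not anticipate any real obstacle: the proof is essentially bookkeeping from the definition of $\mathcal{A}$, the key structural observation being that the product of prime powers coming from primes strictly below $q$ can only grow when $n$ is multiplied by $m$. The only subtlety worth flagging is the sub-case where $n$'s prime factorization terminates below $q$, which is handled by $q \le n$.
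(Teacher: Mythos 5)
Your proof is correct: the case analysis (whether or not $q_i$ divides $n$, and whether $n$ has a prime factor exceeding $q_i$) covers all possibilities, and each step is justified by the definition of $\mathcal{A}$ together with $q_i \le P(m) \le n$. The paper offers no proof, stating only that the lemma ``follows at once from the definition,'' and your direct verification is exactly the routine argument being left implicit there.
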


\begin{proof}[Proof of Theorem \ref{thm2}]
If $z> x/2$, the result follows from Theorem 1.2 of \cite{PDD}, so we may assume $z\le x/2$. 
Let $a=3/4$. We have $\displaystyle b=\frac{3k+l}{4(k+1)}>0$, according to \eqref{bdef}, and $\beta = 1/3+(2/3)b>1/3$. 

Theorem 1.2 of \cite{PDD} shows that the number of $n \in  \mathcal{A} \cap  (2x^{1/3}, 3x^{1/3}]$ is $ \sim c x^{1/3}/\log x$
for some positive constant $c$. 
Let $\epsilon>0$ and $C=(1-e^{-\gamma})^{-1}=2.280...$. 
By Corollary 1 of \cite{OMDD}, the number of these $n$ with $\Omega(n)>(C+\epsilon)\log\log n$ is $o(x^{1/3}/\log x)$, so we may exclude such $n$ and assume
$\Omega(n)\le (C+\epsilon)\log\log n$. 

Since $n \in (2x^{1/3}, 3x^{1/3}]$, the condition $z\ge 3K x^\beta$ implies $z/n \ge K (x/n)^b$. 
By Corollary \ref{cor1}, for each of these $n$, 
the interval $I_n:=[x/n - z/n, x/n]$
contains $\gg (z/n)(\log x/n)^{-\nu} \gg z x^{-1/3}(\log x)^{-\nu}$ integers $m$ that are $(x/n)^{3/4}$-smooth. 
Note that $mn \in [x-z,x]$ for $m\in I_n$.

We will show that for each of these pairs $(n,m)$ as described above, we have $mn \in \mathcal{A}$. 
Let $p = P(m)$. Since $n\ge 2x^{1/3}$, $p \le (x/n)^{3/4} \le x^{1/2}2^{-3/4}$. 
If $p\le x^{1/3}$, then $mn \in \mathcal{A}$, by Lemma \ref{ML}.
If $p> x^{1/3}$, write $m=pr$ and note that $r=m/p \le x/(np) < x^{1/3}$. 
Thus, $rn \in \mathcal{A}$ by Lemma \ref{ML}. 
Since $p\le x^{1/2}2^{-3/4}$, we have 
$ p^2 \le x 2^{-3/2}  < mn = pr n$ and hence $p < rn$.
Thus, $mn =prn \in \mathcal{A}$ also holds in this case, by Lemma \ref{ML}. 

The number of pairs $(m,n)$ is $\gg z(\log x)^{-1-\nu}$,
but several pairs may lead to the same product $mn$. 
We have $\tau(n)\le 2^{\Omega(n)} \le (\log x)^{C \log 2 + \epsilon}$.
By Lemma \ref{ShiuLem}, we have $\sum_{m\in I_n} \tau(m) \ll (z/n)\log x$.
Since the number of $m\in I_n$ that are $(x/n)^{3/4}$-smooth is $ \gg (z/n) (\log x)^{-\nu}$,
we have $\tau(m) \ll (\log x)^{\nu+1}$ for a positive proportion of them.
Thus, we may assume $\tau(m) \ll (\log x)^{\nu+1}$, and therefore $\tau(mn)\ll (\log x)^{\nu+1+C\log 2 +\epsilon}$.
It follows that the number of distinct products $mn$ is 
$$
\gg  \frac{z (\log x)^{-1-\nu}}{ (\log x)^{\nu+1+C\log 2 +\epsilon}}
=\frac{z}{(\log x)^{\mu_0 + \epsilon}}.
$$
\end{proof}

\end{document}